\newtheorem{thrm}{Theorem}[section]
\newtheorem{lem}[thrm]{Lemma}
\theoremstyle{definition}
\newcommand{\D}{{\rm d}}
\begin{document}

\title[Mean width of random polytopes]{The mean width of circumscribed random polytopes}

\author{K\'aroly J. B\"or\"oczky and Rolf Schneider}

\thanks{The first author was supported by OTKA grants 068398 and 049301 and by the EU Marie Curie project BudAlgGeo (MTKD-CT-2004-002988). Both authors were supported by EU Marie Curie projects DiscConvGeo (MTKD-CT-2005-014333) and PHD (MCRN-2004-511953).}
\keywords{Random polytope, mean width, approximation}
\subjclass[2000]{Primary 52A22, Secondary 60D05, 52A27}

\maketitle

\begin{center} {\em Dedicated to Professor Tibor Bisztriczky on the occasion of his 60th birthday} \end{center}

\begin{abstract}
For a given convex body $K$ in ${\mathbb R}^d$, a random polytope $K^{(n)}$ is defined (essentially) as the intersection of $n$ independent closed halfspaces containing $K$ and having an isotropic and (in a specified sense) uniform distribution. We prove upper and lower bounds, of optimal orders, for the difference of the mean widths of $K^{(n)}$ and $K$, as $n$ tends to infinity. For a simplicial polytope $P$, a precise asymptotic formula for the difference of the mean widths of $P^{(n)}$ and $P$ is obtained.
\end{abstract}

\section{Introduction and Results} \label{sect1}

The convex hull of $n$ independent, uniformly distributed random points in a given convex body $K$ in $d$-dimensional Euclidean space is a type of random polytope that has been studied extensively (basic references are found in the surveys \cite{WW93} and \cite{Sch04}, see also \cite{Gru97}). As in the seminal papers of R\'enyi and Sulanke \cite{RS63, RS64} (restricted to the planar case), which initiated this line of research, most of the investigations deal with asymptotic results, for $n$ tending to infinity. In a third paper, R\'enyi and Sulanke \cite{RS68} have studied a dual way of generating random polytopes related to a convex body $K$ (again in the plane), by taking intersections of independent random closed halfspaces containing the body. Subsequently, this approach has attracted less attention than the convex hulls of random points, although it deserves similar interest. In the present paper, we obtain some results on random polytopes generated in the second way.

Throughout the following, $K$ is a convex body with interior points in
$d$-dimen\-sion\-al Euclidean space ${\mathbb R}^d$ ($d \ge 2$). 
For any notions on convexity in this paper, see the monographs
Schneider \cite{Sch93} or  Gruber \cite{Gru07}. 
Let $B^d$ be the unit ball of ${\mathbb R}^d$ with center at the origin; 
then $K_1:=K+B^d$ is the parallel body of $K$ at distance $1$. By ${\mathcal H}$ we denote the space (with its usual topology) of hyperplanes in ${\mathbb R}^d$, and ${\mathcal H}_K$ is the subspace of hyperplanes meeting $K_1$ but not the interior of $K$. For $H\in {\mathcal H}_K$, the closed halfspace bounded by $H$ that contains $K$ is denoted by $H^-$. The measure $\mu$ is the motion invariant Borel measure on ${\mathcal H}$, normalized so that $\mu(\{H\in {\mathcal H}: H\cap M\not=\emptyset\})$ is the mean width $W(M)$ of $M$, for every convex body $M\subset {\mathbb R}^d$. Let $2\mu_K$ be the  restriction of $\mu$ to ${\mathcal H}_K$. Since $\mu({\mathcal H}_K)= W(K+B^d)-W(K)= W(B^d)=2$, the measure $\mu_K$ is a probability measure. For $n\in{\mathbb N}$, let $H_1,\dots,H_n$ be independent random hyperplanes in ${\mathbb R}^d$ (${\mathcal H}$-valued random variables on some probability space $(\Omega,{\bf A},{\mathbb P})$), each with distribution $\mu_K$. The intersection $\bigcap_{i=1}^n H_i^-$ is a random polyhedral set, possibly unbounded. We put
\[ K^{(n)}:= \bigcap_{i=1}^n H_i^-\cap K_1\]
and ask for ${\mathbb E}W(K^{(n)})$, where ${\mathbb E}$ denotes mathematical expectation. Alternatively, we might consider ${\mathbb E}_1 W(K^{(n)})$, the conditional expectation of $W(K^{(n)})$ under the condition that $\bigcap_{i=1}^nH_i^-\subset K_1$. Since ${\mathbb E}W(K^{(n)})={\mathbb E}_1 W(K^{(n)})+O(\gamma^n)$ with $\gamma \in (0,1)$, as is easy to see, there is no difference in the asymptotic behaviors of both quantities, as $n\to\infty$. We also remark that, for the asymptotic results, the parallel body $K_1$ could be replaced by any other convex body containing $K$ in its interior; this would only affect some normalization constants. 

The preceding model has to be distinguished from the one where $n$ independent random points are chosen from the boundary of $K$, and the intersection of the supporting halfspaces of $K$ at these points is the random polyhedron under consideration. For this model and sufficiently smooth convex bodies,  B\"or\"oczky and Reitzner \cite{BR04} have derived asymptotic expansions of the expectations of volume, surface area and mean width.

For comparison, we mention first some results involving convex hulls of random points. Let $K_n$ be the convex hull of $n$ independent, uniformly distributed random points in the convex body $K$.  Throughout this paper, $c_1,c_2,\dots$ are positive constants that depend only on $K$ and $d$. In writing $n\ge n_0$, we indicate that a result is true for all sufficiently large $n\in {\mathbb N}$.
There exist $c_1,\dots,c_4$ such that, for $n\ge n_0$,
\begin{equation}\label{1}
c_1 n^{-2/(d+1)} < W(K)-{\mathbb E}W(K_n) < c_2n^{-1/d},
\end{equation}
\begin{equation}\label{2}
c_3 n^{-1}\ln ^{d-1}n < V(K)-{\mathbb E}V(K_n) < c_4n^{-2/(d+1)},
\end{equation}
where $V$ denotes the volume.
The inequalities (\ref{1}) are due to Schneider \cite{Sch87}, and (\ref{2}) to  B\'{a}r\'{a}ny and Larman \cite{BL88}. The orders are best possible, being attained in (\ref{1})(left) and (\ref{2})(right) by sufficiently smooth bodies, and in (\ref{1})(right) and (\ref{2})(left) by polytopes. 

If one sets about obtaining analogous results for random polytopes obtained as intersections of halfspaces, the idea of dualizing immediately comes to mind. Supposing that $o\in{\rm int}\,K$, polarization with respect to the unit sphere sends $K$ to its polar body $K^*$, and it interchanges hyperplanes not meeting ${\rm int}\,K$ with points in $K^*$ (hence, mean width and volume should interchange their roles); {\it cum grano salis}, intersections of halfspaces correspond to convex hulls of points. Measures on hyperplanes correspond to measures on points; however, uniform measures do not correspond to uniform measures, and mean width and volume are not exactly related under polarity. Nevertheless, this approach can be put to work in some cases, or arguments may find their dual analogs in a more heuristic way. In this more or less vague sense, duality has been applied to polygons and vertex numbers in the plane by Ziezold \cite{Zie70}, and to smooth and general convex bodies in higher dimensions by Kaltenbach \cite{Kal90}. In particular, Kaltenbach has established a counterpart to (\ref{1}), namely
\begin{equation}\label{4}
c_5 n^{-2/(d+1)} < {\mathbb E}V(K^{(n)})-V(K) < c_6n^{-1/d}.
\end{equation}
We obtain here a counterpart to (\ref{2}), again with optimal orders.

\begin{thrm}\label{t1}
For $n\ge n_0$,
\begin{equation}\label{5}
c_7 n^{-1}\ln^{d-1}n < {\mathbb E}W(K^{(n)})-W(K) < c_8n^{-2/(d+1)}.
\end{equation}
\end{thrm}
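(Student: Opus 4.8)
The plan is to attack the two inequalities in (\ref{5}) separately, since their extremal configurations are opposite: the upper bound should be hardest for smooth bodies, and the lower bound should be realized by polytopes. Throughout, I will exploit the duality heuristic mentioned in the excerpt only as a guide; the actual arguments will be carried out directly on the hyperplane model, because the measures do not transform cleanly under polarity.

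For the upper bound ${\mathbb E}W(K^{(n)})-W(K)<c_8 n^{-2/(d+1)}$, the key observation is that the mean width excess is an integral over directions of the support-function excess. Writing $W(K^{(n)})-W(K)=\frac{2}{\kappa_d}\int_{S^{d-1}}\bigl(h_{K^{(n)}}(u)-h_K(u)\bigr)\,\sigma(\D u)$ (with $\sigma$ spherical Lebesgue measure and $\kappa_d$ an appropriate normalizing constant coming from the definition of $W$), I would take expectations and use Fubini to reduce to estimating, for a fixed direction $u$, the expected excess $h_{K^{(n)}}(u)-h_K(u)$ in the support function. For fixed $u$, the excess is governed by whether the random halfspaces leave a ``cap'' near the boundary point of $K$ in direction $u$ uncovered. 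The probability that a given hyperplane $H_i$ cuts off more than a prescribed amount $t$ in direction $u$ is a decreasing function of $t$, and the probability that none of the $n$ independent hyperplanes covers a cap of depth $t$ is $(1-p(t))^n$. Integrating $\int_0^\infty {\mathbb P}\{h_{K^{(n)}}(u)-h_K(u)>t\}\,\D t$ and then over $u$, and carefully estimating the cap measures $p(t)$ from below (using that $K$ has interior points, so near any boundary point $K$ is sandwiched between two balls after a suitable local argument, or more crudely using a rolling-ball-free estimate that works for general convex bodies), should yield the order $n^{-2/(d+1)}$. This is the direct analog of the B\'ar\'any--Larman upper bound in (\ref{2}) and I expect the exponent $2/(d+1)$ to emerge from the standard economic-cap / floating-body scaling in which a cap of ``width'' $t$ in the normal direction has base of diameter $\sim t^{1/2}$, giving measure $\sim t^{(d+1)/2}$.

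For the lower bound $c_7 n^{-1}\ln^{d-1}n<{\mathbb E}W(K^{(n)})-W(K)$, which must hold for \emph{every} convex body $K$ (and is sharp for polytopes), the idea is that near a vertex-like feature one cannot avoid leaving an uncovered region of controlled size. Since the lower bound has to be valid for arbitrary $K$ with interior points, I would first reduce to a polytope contained in $K$: it suffices to exhibit \emph{some} simplex (or cube) $\Delta\subset K$ touching $\partial K$ so that the local geometry near a vertex of $\Delta$ forces a mean-width deficit of order $n^{-1}\ln^{d-1}n$. Near such a flat vertex, to approximate the support function well in a solid-angle's worth of directions one needs the random hyperplanes to cover a box-like cap whose measure in the relevant direction scales like $t^d$ rather than $t^{(d+1)/2}$; the expected leftover excess then behaves like $\int_0^\infty(1-c\,t^{\,\cdot})^n$ with the polytopal exponent, and the product structure across the $d-1$ edge-directions at the vertex produces the logarithmic factor $\ln^{d-1}n$. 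Concretely, I would estimate ${\mathbb E}\bigl[h_{K^{(n)}}(u)-h_K(u)\bigr]$ from below for a positive-measure set of directions $u$ near the outer normals at such a vertex, using the elementary inequality ${\mathbb E}X\ge\int_0^\infty{\mathbb P}\{X>t\}\,\D t$ together with $(1-p)^n\ge$ (a usable lower bound), and integrate over these $u$.

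I expect the \textbf{main obstacle} to be the lower bound, specifically the appearance of the $\ln^{d-1}n$ factor. Getting the correct power of the logarithm requires understanding the combinatorial structure of how $n$ random halfspaces leave an uncovered neighborhood of a polytope vertex: the deficit is concentrated where \emph{all} of several nearly-independent ``slab'' events fail simultaneously, and each of the $d-1$ independent directions along the vertex contributes one factor of $\ln n$ through a coupon-collector-type tail integral $\int_0^1(1-s)^n\,\D s/s\sim \ln n$. Making this rigorous means setting up good coordinates at the vertex, verifying the asymptotic independence (or at least a clean lower estimate that decouples) of the relevant cap-covering events along each edge direction, and controlling the measure $\mu_K$ of the thin hyperplane families involved. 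The upper bound, by contrast, should follow fairly mechanically from the integral-geometric cap estimate once the correct scaling is identified, so I would invest most of the effort in the lower bound and in confirming that the polytopal example is genuinely extremal, matching the claimed order $n^{-1}\ln^{d-1}n$.
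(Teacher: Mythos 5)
Your guiding heuristics (smooth bodies extremal for the upper bound, polytopes for the lower, caps of measure $\sim t^{(d+1)/2}$ versus $\sim t^{d}$) are correct, but both halves of your plan have a gap at the decisive step, and the paper's proof goes around both obstacles rather than through them.

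\textbf{Upper bound.} The tail integration $\int_0^\infty{\mathbb P}\{h_{K^{(n)}}(u)-h_K(u)>t\}\,\D t$ requires an \emph{upper} bound on the probability that the excess exceeds $t$. That event is: there exists a point $x\in K_1$ with $\langle x,u\rangle\ge h_K(u)+t$ separated from $K$ by none of $H_1,\dots,H_n$ --- an existential statement over an infinite family of correlated single-point events, not the complement of a single cap-covering event, so one cap probability $p(t)$ does not control it. Making the union bound work over an economic net of such points is precisely the Macbeath-region/economic-cap-covering machinery behind the B\'{a}r\'{a}ny--Larman upper bound in (\ref{2}), and the paper states explicitly that this technique does not dualize in an obvious way. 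It avoids it entirely: it proves Theorem \ref{t2} (balls maximize ${\mathbb E}W(K^{(n)})/W(K)$) by showing that $K\mapsto{\mathbb E}W(K^{(n)})$ is concave under Minkowski addition and applying Hadwiger's theorem on rotation means, and then obtains the order $n^{-2/(d+1)}$ for the ball by polarity from known results on convex hulls of random points. Your plan as written stalls at the covering step.

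\textbf{Lower bound.} The reduction ``it suffices to exhibit some simplex $\Delta\subset K$ touching $\partial K$'' is not valid: the hyperplanes are distributed according to $\mu_K$ and the caps they must cover are caps of $K$, not of $\Delta$. If $K$ is smooth at the touching point, there is no box-like cap of measure $\sim t^{d}$ and no product of $d-1$ nearly independent slab events, so your coupon-collector computation applies only when $K$ itself has a conical vertex; the whole difficulty is a bound uniform over all convex bodies. The paper's route is: the first-moment estimate ${\mathbb E}W(K^{(n)})-W(K)\ge(1-t)^n\mu({\mathcal H}_K(t))$ with $t=1/n$ (this part does match your $\int_0^\infty{\mathbb P}\{X>t\}\,\D t$ idea), then the identification of $\mu({\mathcal H}_K(t))$ with $W(K[t])-W(K)$ and, via the polarity map $\varphi$ and the comparison of $\mu$ with the measure $\nu$, with the volume of the wet part $K^*(ct)$ of the polar body, and finally the cited B\'{a}r\'{a}ny--Larman theorem $\lambda(K^*(\epsilon))\ge c\,\epsilon\ln^{d-1}(1/\epsilon)$, valid for \emph{every} convex body. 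That cited volume bound is where the factor $\ln^{d-1}n$ for arbitrary $K$ actually comes from, and your proposal contains no substitute for it.
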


The right side follows from a result of independent interest.

\begin{thrm}\label{t2}
For each $n\in{\mathbb N}$, the functional $K\mapsto {\mathbb E}W(K^{(n)})/W(K)$ attains its maximum at balls.
\end{thrm}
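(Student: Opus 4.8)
The plan is to prove the stronger statement that, for each $n$, the functional $L\mapsto{\mathbb E}W(L^{(n)})$ is concave with respect to Minkowski combinations, and then to deduce the theorem by symmetrizing over the rotation group. First I would record the parametrization of ${\mathcal H}_K$ that makes the model transparent. Writing $h_L$ for the support function of a convex body $L$, every $H\in{\mathcal H}_K$ has a unique outer unit normal $v\in S^{d-1}$ (the one for which $K\subset H^-$), and then $H^-=\{x:\langle x,v\rangle\le h_K(v)+s\}$ for a unique $s\in[0,1]$. A direct computation with the motion invariant measure $\mu$ shows that, in the coordinates $(v,s)$, the probability measure $\mu_K$ is the product of the uniform distribution on $S^{d-1}$ and the uniform distribution on $[0,1]$; crucially, this law does \emph{not} depend on $K$, the body entering only through the offsets $h_K(v)$. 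Hence one may realise the random polytopes for all bodies simultaneously from a single sample $(v_1,s_1),\dots,(v_n,s_n)$. This common coupling is the device on which everything rests.

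The key step is a pointwise inclusion. Fix the sample and let $L_\lambda=(1-\lambda)L_0+\lambda L_1$ be a Minkowski combination, $\lambda\in[0,1]$. If $x_0\in L_0^{(n)}$ and $x_1\in L_1^{(n)}$, then $\langle x_j,v_i\rangle\le h_{L_j}(v_i)+s_i$ for $j=0,1$ and all $i$; since $h$ is Minkowski linear and the offsets $s_i$ are common to both constructions, the point $(1-\lambda)x_0+\lambda x_1$ satisfies $\langle\,\cdot\,,v_i\rangle\le h_{L_\lambda}(v_i)+s_i$ for every $i$, and it lies in $(1-\lambda)(L_0+B^d)+\lambda(L_1+B^d)=L_\lambda+B^d$. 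Therefore $(1-\lambda)L_0^{(n)}+\lambda L_1^{(n)}\subset L_\lambda^{(n)}$ realisation by realisation. Because mean width is monotone under inclusion, additive under Minkowski addition, and positively homogeneous, taking mean widths and then expectations gives ${\mathbb E}W(L_\lambda^{(n)})\ge(1-\lambda){\mathbb E}W(L_0^{(n)})+\lambda{\mathbb E}W(L_1^{(n)})$; the same argument yields the analogous inequality for any finite Minkowski combination.

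Finally I would symmetrize. For $\rho\in SO(d)$ the substitution $x\mapsto\rho x$, together with the rotation invariance of the sampling law, gives ${\mathbb E}W((\rho K)^{(n)})={\mathbb E}W(K^{(n)})$. The Minkowski rotation mean $\overline K=\int_{SO(d)}\rho K\,\D\rho$ has constant support function $W(K)/2$, so it is the ball $B$ with the same mean width as $K$. Approximating $\overline K$ in the Hausdorff metric by finite Minkowski means $\sum_j\lambda_j\rho_jK$ of rotates of $K$ and using the finite concavity inequality, each such mean satisfies ${\mathbb E}W\bigl((\sum_j\lambda_j\rho_jK)^{(n)}\bigr)\ge\sum_j\lambda_j{\mathbb E}W((\rho_jK)^{(n)})={\mathbb E}W(K^{(n)})$; letting the approximation converge and invoking the continuity of $L\mapsto{\mathbb E}W(L^{(n)})$ in the Hausdorff metric yields ${\mathbb E}W(B^{(n)})\ge{\mathbb E}W(K^{(n)})$. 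As $W(B)=W(K)$, this is precisely ${\mathbb E}W(K^{(n)})/W(K)\le{\mathbb E}W(B^{(n)})/W(B)$, proving that the maximum is attained at balls.

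I expect the main obstacle to be the enabling observation rather than the algebra: recognising that, in the support parametrization $(v,s)$, the law $\mu_K$ is independent of $K$, without which the simultaneous coupling of all bodies (and hence the pointwise inclusion) is unavailable. At the technical level the remaining care is in the passage from finite Minkowski means to the rotation mean $\overline K$, namely verifying that $L\mapsto{\mathbb E}W(L^{(n)})$ is continuous in the Hausdorff metric (dominated convergence, since all $L^{(n)}$ sit inside a fixed ball and $h_{L_m}\to h_L$ uniformly) and that the finite means do converge to the ball of equal mean width.
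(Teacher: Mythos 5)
Your argument is correct and follows essentially the same route as the paper: you rewrite $\mu_K$ in the support parametrization as a $K$-independent product of uniform measures on $S^{d-1}\times[0,1]$, couple all bodies on one sample, prove the pointwise inclusion $(1-\lambda)L_0^{(n)}+\lambda L_1^{(n)}\subset L_\lambda^{(n)}$ to get Minkowski concavity of $L\mapsto{\mathbb E}W(L^{(n)})$, and then pass to a ball of the same mean width via rotation means (the paper cites Hadwiger's theorem for the convergence you verify by hand). The only differences are cosmetic, e.g.\ you identify the limit ball explicitly and are slightly more careful than the paper's text about the containment in $L_\lambda+B^d$.
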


The following precise asymptotic formula is a counterpart to a result of Affentranger and Wieacker \cite{AW91}.
\begin{thrm}\label{t3}
If $P$ is a simplicial polytope in ${\mathbb R}^d$ with $r$ facets, then, as $n\to\infty$,
\begin{equation}
{\mathbb E}W(P^{(n)}) - W(P) \sim \frac{2rd}{(d+1)^{d-1}}\,\frac{\ln^{d-1}n}{n}.
\end{equation}
\end{thrm}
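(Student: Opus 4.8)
The plan is to reduce the computation of the mean width to an integral of support functions over the sphere, and then to localize the analysis near the normal directions of the $r$ facets of $P$. Write $h_M$ for the support function of a convex body $M$, let $\kappa=\sigma(S^{d-1})$ with $\sigma$ the spherical Lebesgue measure, and recall $W(M)=(2/\kappa)\int_{S^{d-1}}h_M\,\D\sigma$. Since $P\subseteq P^{(n)}$, the mean width difference is nonnegative and
\[
\mathbb{E}W(P^{(n)})-W(P)=\frac{2}{\kappa}\int_{S^{d-1}}\mathbb{E}\bigl[h_{P^{(n)}}(u)-h_P(u)\bigr]\,\sigma(\D u).
\]
Thus everything reduces to understanding, for each direction $u$, the expected amount $\mathbb{E}[h_{P^{(n)}}(u)-h_P(u)]$ by which $P^{(n)}$ protrudes beyond $P$, and then integrating over the sphere.

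First I would record the description of the process: under $\mu_P$ a random hyperplane is obtained by choosing its outer unit normal $u$ uniformly on $S^{d-1}$ and, independently, a depth $s\in[0,1]$ uniformly, the halfspace being $\{x:\langle x,u\rangle\le h_P(u)+s\}$. Hence the $n$ bounding hyperplanes form, up to the truncation by $P_1$ (which contributes only an exponentially small error, exactly as noted for $\mathbb{E}W$ versus $\mathbb{E}_1W$), a binomial process that is asymptotically Poisson of intensity $n/\kappa$ in the parameters $(u,s)$. I would then show that the integrand is concentrated near the facet normals: for $u$ in the relatively open normal cone of a vertex or of a lower-dimensional face, the protrusion is of strictly smaller order than near a facet normal, so the leading contribution is the sum over the $r$ facets. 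This localization, together with the proof that faces of dimension below $d-1$ are asymptotically negligible, is one of the two technical cores of the argument.

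For a fixed facet $F=F_j$ I would place it in $\{x_d=0\}$ with outer normal $e_d$ and $P$ locally in $\{x_d\le 0\}$. Writing $u=(\xi,\sqrt{1-|\xi|^2})$ with $\xi\in\mathbb{R}^{d-1}$ small, the bounding hyperplanes with normal near $e_d$ become, over $F$, the affine graphs $x_d=s_i+h_F(\xi_i)-\langle y,\xi_i\rangle$, whose lower envelope is a random concave roof $g(y)=\min_i\bigl[s_i+h_F(\xi_i)-\langle y,\xi_i\rangle\bigr]$ above $F$. A short computation then identifies the local contribution of $F$ to the integral above with $\int_{\mathbb{R}^{d-1}}\mathbb{E}\bigl[\max_y(g(y)+\langle y,\xi\rangle)-h_F(\xi)\bigr]\,\D\xi$, an integral-geometric functional of the roof. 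This is precisely a $d$-dimensional dual random-polytope functional over the polytopal base $F_j$ (the $d-1$ facet directions together with the height), and it is here that the factor $\ln^{d-1}n$ is generated, exactly as in the polytopal instances of the convex-hull model treated by Affentranger and Wieacker.

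The main obstacle, and the second technical core, is the exact asymptotic evaluation of this local functional and the verification that it produces a universal per-facet constant. Because $P$ is simplicial, each facet $F_j$ is a $(d-1)$-simplex; an affine normalization maps it to a standard simplex, and the leading term of the roof functional should be affine-covariant in just the way needed so that every facet contributes the same amount, independent of its size and shape. I expect the dominant contribution to localize near the $d$ corners of each simplex facet, where the roof is governed by a few hyperplanes of small depth, and the evaluation to reduce — after passing to the Poisson limit and rescaling — to a deterministic integral yielding the stated constant after normalization. Tracking the normalization constants, multiplying by the prefactor $2/\kappa$, and summing over the $r$ facets should then give $\frac{2rd}{(d+1)^{d-1}}\cdot\frac{\ln^{d-1}n}{n}$. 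Finally I would assemble the pieces, checking that the errors from the localization, from the Poisson approximation, and from the $P_1$ truncation are all of order $o(\ln^{d-1}n/n)$.
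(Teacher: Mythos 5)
Your overall strategy is sound and is essentially the polar--dual of the paper's argument: the identity $\frac{2}{\kappa}\int_{S^{d-1}}\mathbb{E}\bigl[h_{P^{(n)}}(u)-h_P(u)\bigr]\,\sigma(\D u)$ is exactly the Glasauer--Gruber formula $2\,\mathbb{E}\,\mu^*_P(P^*\setminus P^*_n)$ on which the paper's proof rests, your facet normals correspond to the vertices of the simple polytope $P^*$, and your ``random roof'' over a facet is the polar image of the convex hull of random points near a simple corner of $P^*$. The paper exploits this by polarizing at the outset, so that the whole problem becomes an Affentranger--Wieacker-type convex-hull problem (for the density $\|x\|^{-(d+1)}$, with the fixed body $P_1^*$ adjoined), and the known machinery for simple polytopes can be imported almost verbatim; your primal roof picture would force you to redevelop all of that from scratch.

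The genuine gap is that the two steps you yourself label the ``technical cores'' are precisely the content of the theorem, and neither is carried out; in particular the constant $2rd/(d+1)^{d-1}$ is never derived, only announced as what the computation ``should'' give. To close the gap you would need (i) a quantitative localization showing that directions away from the facet normals contribute only $O(\ln^{d-2}n/n)$ --- in the paper this is the case distinction $p<d$ in Lemma 3.1 together with Lemma 4.1, which sandwiches the wet-part measure between $\mathbb{E}\,T^*_1(P^*_n)$ and $\mathbb{E}\,T^*_1(P^*_n)+O(\ln^{d-2}n/n)$ via an Efron-type argument; and (ii) an exact evaluation of the per-facet functional, which in the paper requires a Blaschke--Petkantschin decomposition, the Affentranger--Wieacker asymptotics $\int_{(0,1]^d}(s_1\cdots s_d)^{k}(1-cs_1\cdots s_d)^{n-m}\,\D s\sim\frac{k!}{(d-1)!\,c^{k+1}}\,\frac{\ln^{d-1}n}{n^{k+1}}$, and Reed's moment $M_2(\Delta_{d-1})=(d-1)!/\bigl(d^{d-1}(d+1)^{d-1}\bigr)$; no corresponding computation appears in your sketch. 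A further caution: your heuristic that the mass localizes ``near the $d$ corners of each simplex facet'' does not match what the computation shows --- the $\ln^{d-1}n$ arises from the multiscale degeneracy of $s_1\cdots s_d\approx 1/n$ over the $d$ edge directions at a simple corner of $P^*$ (equivalently, from bounding hyperplanes tilted toward the $d$ ridges of the facet at all intermediate scales), so a naive corner localization would miss the main term.
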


For the polytope $P$ in Theorem \ref{t3}, we also obtain asymptotic results for the numbers of vertices and facets. We denote by $f_k(P^{(n)})$ the number of $k$-faces of $P^{(n)}$ that are contained in the interior of $P_1$ (recall that ${\mathbb P}(P^{n)}\not\subset P_1) = O(\gamma^n)$ with $0<\gamma<1$). Then
\begin{equation} \label{f0}
{\mathbb E}f_0(P^{(n)})\sim\frac{rd^d}{d!}M_1(\Delta_{d-1})\ln^{d-1}n,
\end{equation}
where the constant $M_1(\Delta_{d-1})$ is given by (\ref{M}), and
\begin{equation} \label{fd-1}
{\mathbb E}f_{d-1}(P^{(n)})\sim\frac{rd}{(d+1)^{d-1}}\ln^{d-1}n.
\end{equation}

\vspace{2mm}

\noindent{\bf Remark.} Based on the paper of B\'ar\'any and Buchta \cite{BB93}
generalizing the results of Affentranger and Wieacker \cite{AW91},
one can most probably extend Theorem~\ref{t3} as follows.
Let $P$ be any polytope in ${\mathbb R}^d$.
We write $T(P)$ to denote the number of flags (or towers) of $P$; namely,
the number of chains $F_0\subset\ldots\subset F_{d-1}$
where $F_i$ is an $i$-face of $P$.
Then, as $n\to\infty$,
\begin{eqnarray*}
{\mathbb E}W(P^{(n)}) - W(P) &\sim&
\frac{2T(P)}{(d+1)^{d-1}(d-1)!}\,\frac{\ln^{d-1}n}{n},\\
{\mathbb E} f_{d-1}P^{(n)} &\sim&
\frac{T(P)}{(d+1)^{d-1}(d-1)!}\,\ln^{d-1}n,\\
{\mathbb E} f_{0}P^{(n)} &\sim&
\frac{T(P)d^d}{(d!)^2}\,M_1(\Delta_{d-1})\ln^{d-1}n.
\end{eqnarray*}
Moreover, it is well known that $M_1(\Delta_1)=\frac13$ and $M_1(\Delta_2)=\frac1{12}$, and Buchta and Reitzner \cite{BR01} proved $M_1(\Delta_3) = \frac{13}{720}-\frac{\pi^2}{15015}$.

\section{Proofs of Theorems \ref{t1}, \ref{t2}}
\label{sec2}

First, we fix some more notation. In the following, $S^{d-1}:=\{x\in{\mathbb R}^d:\langle x,x\rangle=1\}$ (´where $\langle\cdot,\cdot\rangle$ denotes the scalar product) is the unit sphere of ${\mathbb R}^d$, $\lambda$ is Lebesgue measure on ${\mathbb R}^d$, and $\sigma$ is the spherical Lebesgue measure on $S^{d-1}$. 

For sets $A_1,\dots,A_m$ and points $x_1,\dots,x_k$ in ${\mathbb R}^d$, $m,k\in{\mathbb N}_0$,  we write
$$ [A_1,\dots,A_m,x_1,\dots,x_k]:={\rm conv}(A_1\cup\ldots\cup A_m\cup\{x_1,\dots,x_k\}).$$

Before the proofs, we want to substantiate the remark, made in the introduction, about the comparison between ${\mathbb E}W(K^{(n)})$ and the conditional expectation ${\mathbb E}_1W(K^{(n)})$. Clearly, there are finitely many hyperplanes $E_j\in{\mathcal H}_K$, $j=1,\dots,k$, such that $\bigcap_{j=1}^k E^-_j\subset K+(1/2)B^d$. We can choose neighborhoods $N_j$ of $E_j$, $j=1,\dots,k$, of equal measure $\mu_K(N_j)=:\alpha\in(0,1)$, such that any hyperplanes $H_1,\dots,H_k$ with $H_j\in N_j$, $j=1,\dots,k$, satisfy $\bigcap_{j=1}^k H^-_j\subset K+B^d$. Now let $H_1,\dots,H_n$ be independent random hyperplanes with distribution $\mu_K$, and define $\bigcap_{i=1}^nH_i^-:=P_n$. The event $P_n \not\subset K_1$ occurs only if one of the events $A_j:=\{H_i\notin N_j \mbox{ for } i=1,\dots,k\}$ occurs. It follows that ${\mathbb P}(P_n\not\subset K_1) \le {\mathbb P}(\bigcup_{j=1}^kA_j)\le \sum_{j=1}^k{\mathbb P}(A_j) =k(1-\alpha)^n$. From 
\begin{align*} 
&  {\mathbb E}W(K^{(n)}) - (1-{\mathbb P}(P_n\not\subset K_1)){\mathbb E}_1W(K^{(n)})\\ 
&  = \int_{P_n\not\subset K_1} W(K^{(n)})\,\D {\mathbb P}\le W(K_1){\mathbb P}(P_n\not\subset K_1)
\end{align*}
we now conclude that ${\mathbb E}W(K^{(n)}) - {\mathbb E}_1W(K^{(n)}) = O(\gamma^n)$ with $\gamma\in(0,1)$.

Turning to the proof of Theorem \ref{t1}, we first recall that B\'{a}r\'{a}ny and Larman \cite{BL88} proved (\ref{2}) after establishing the following general result. For $x\in K$, let $v(x)$ be the minimal volume that a closed halfspace with $x$ in its boundary cuts off from $K$. For (small) $t>0$, let
\begin{equation} \label{6}
K(t) :=\{ x\in K: v(x)\le t\}.
\end{equation}

In \cite{BL88}, the existence of positive constants $c_9,c_{10}$ with
\begin{equation} \label{6a}
c_9 V(K(1/n)) < V(K) -{\mathbb E}V(K_n) < c_{10}V(K(1/n))
\end{equation}
for $n\ge n_0$ was proved. Part of this approach will now be `dualized'. 

For $x\in{\mathbb R}^d$, let $K_x:=[K,x]$, and let $w(x)$ be the $\mu$-measure of the set of hyperplanes separating $K$ and $x$, thus $w(x)=W(K_x)-W(K)$. For a hyperplane $H\in{\mathcal H}_K$, let 
$$ m(H):= \min\{w(x): x\in H\},$$
and for $t>0$ (sufficiently small), define
\[ {\mathcal H}_K(t) := \{H\in {\mathcal H}_K: m(H)\le t\}.\]
It is convenient to describe this set of hyperplanes in a different way. For this, put
\[ K[t]:=\{ x\in{\mathbb R}^d:w(x)\le t\}.\]
Let $x,y\in K[t]$, $\lambda\in[0,1]$, and $z\in K_{(1-\lambda)x+\lambda y}$. Then $z= (1-\alpha)[(1-\lambda)x +\lambda y]+\alpha k$ with suitable $k\in K$ and $\alpha\in[0,1]$. It follows that 
\[ z= (1-\lambda)[(1-\alpha)x+\alpha k] +\lambda[(1-\alpha)y+\alpha k] \in (1-\lambda)K_x+\lambda K_y,\]
thus $K_{(1-\lambda)x+\lambda y}\subset (1-\lambda)K_x+\lambda K_y$. This gives
\begin{align*}
W(K_{(1-\lambda)x+\lambda y}) &\le W((1-\lambda)K_x+\lambda K_y)=(1-\lambda)W(K_x)+\lambda W(K_y)\\
&\le (1-\lambda)(W(K)+t)+\lambda(W(K)+t) = W(K)+t,
\end{align*}
hence $w((1-\lambda)x+\lambda y)\le t$ and thus $(1-\lambda)x+\lambda y\in K[t]$. This shows that $K[t]$ is convex.

Now let $H\in{\mathcal H}_K$. If $H\cap K[t]\not=\emptyset$, then $H$ contains a point $x$ with $w(x)\le t$, hence $m(H)\le t$ and, therefore, $H\in{\mathcal H}_K(t)$. If $H\cap K[t]=\emptyset$, then every $x\in H$ satisfies $w(x)>t$, and since $m(H)$ is an attained minimum, also $m(H)>t$ and hence $H\notin{\mathcal H}_K(t)$. Thus, ${\mathcal H}_K(t)$ is the set of hyperplanes meeting the convex body $K[t]$ but not the interior of $K$. In particular, 
\[ \mu({\mathcal H}_K(t))=W(K[t]) - W(K).\]

The left inequality in (\ref{6a}) admits a straightforward dualization, as already noted by Kaltenbach \cite{Kal90}. The following argument, which we give for the reader's convenience, is the exact dual analog of that in \cite[p. 283]{BL88}. Let $H_1,\dots,H_n$ and $K^{(n)}$ be as in the introduction, $n>d$. Let $H\in {\mathcal H}_K$ and choose $x_0\in H$ such that $w(x_0)=m(H)$. If $H_i$ does not separate $x_0$ and $K$, for $i=1,\dots,n$, then $H\cap K^{(n)}\not=\emptyset$, hence
\[ {\mathbb P}(H\cap  K^{(n)}\not=\emptyset) \ge (1-m(H))^n.\]
For small $t>0$, we obtain
\begin{align*}
& {\mathbb E}W(K^{(n)})-W(K) = \int\int_{{\mathcal H}_K}{\bf 1}\{H\cap K^{(n)}\not=\emptyset\}\,\mu(\D H)\,\D{\mathbb P}\\ 
& = \int_{{\mathcal H}_K}{\mathbb P}(H\cap K^{(n)}\not=\emptyset)\,\mu(\D H)\ge \int_{{\mathcal H}_K}(1-m(H))^n\mu(\D H)\\
& > \int_{{\mathcal H}_K} {\bf 1}\{m(H)\le t\}(1-t)^n\mu(\D H) \\
&  = (1-t)^n\mu({\mathcal H}_K(t)) =(1-t)^n(W(K[t])-W(K)).
\end{align*}
The choice $t=1/n$ gives
\begin{equation}\label{7}
c_{11}(W(K[1/n])-W(K))< {\mathbb E}W(K^{(n)})-W(K).
\end{equation}

Next, we carry over results from \cite{BL88} by applying them to the polar body.  We assume that $o\in{\rm int}\,K$ and let $K^*$ denote the polar body of $K$. We write the points of ${\mathbb R}^d$ in the form $ru$ with $u\in S^{d-1}$ and $r\ge 0$ and the hyperplanes of ${\mathbb R}^d$ in the form $H(u,t) := \{x\in {\mathbb R}^d:\langle x,u\rangle =t\}$ with $u\in S^{d-1}$ and $t\ge 0$. The map $\varphi:{\mathbb R}^d\setminus \{o\}\to {\mathcal H}$ is defined by 
\begin{equation}\label{map}
\varphi(ru):= H(u,r^{-1}).
\end{equation} Let $\nu$ denote the image measure of $\lambda$ under $\varphi$, then
\[ \nu(A) = \int_{S^{d-1}}\int_0^{\infty} {\bf 1}\{H(u,t)\in A\}t^{-(d+1)}\,\D t\,\sigma(\D u)\]
for Borel sets $A\subset {\mathcal H}$. For comparison, the invariant measure $\mu$ is given by
\begin{equation}\label{8b} 
\mu(A) = \frac{2}{\sigma(S^{d-1})}\int_{S^{d-1}}\int_0^{\infty} {\bf 1}\{H(u,t)\in A\}\,\D t\,\sigma(\D u).
\end{equation}
Consequently, there exist positive constants $c_{12},c_{13}$ such that
\[ c_{12}\nu(A) \le \mu(A) \le c_{13}\nu(A) \qquad\mbox{if } A\subset {\mathcal H}_K.\]

In the following, we assume that $0\le t<t_0$, where $t_0$ is chosen such that $K_1^*\cap K^*(t_0) = \emptyset$; here $K_1^*:=(K_1)^*$, and  $K^*(t_0)$ is defined by (\ref{6}). Let $x\in K^*(t)$. There is a hyperplane $E$ through $x$ such that $\lambda(K^*\cap E^+)\le t$, where $E^+$ is the halfspace bounded by $E$ that does not contain $o$. Let $H:=\varphi(x)$ and $y:=\varphi^{-1}(E)$, then $y\in H$. The mapping $\varphi$ maps the cap $K^*\cap E^+$ bijectively onto the set of hyperplanes separating $y$ and $K$, which is denoted by ${\mathcal H}_K^y$ and is a subset of ${\mathcal H}_K$, by the choice of $t_0$. We conclude that
\[ m(H) \le \mu({\mathcal H}_K^y) \le c_{13}\nu({\mathcal H}_K^y)= c_{13}\lambda(K^*\cap E^+) \le c_{13}t\]
and hence that $H\in {\mathcal H}_K(c_{13}t)$. Since $x\in K^*(t)$ was arbitrary, this shows that $\varphi(K^*(t))\subset {\mathcal H}_K(c_{13}t)$; therefore,
\[ \lambda(K^*(t))=\nu(\varphi(K^*(t)) \le \nu({\mathcal H}_K(c_{13}t)) \le c_{12}^{-1}\mu({\mathcal H}_K(c_{13}t)).\]
Now (\ref{7}) together with this inequality gives, for $n\ge n_0$,
\[ {\mathbb E}W(K^{(n)})-W(K) \ge c_{11}\mu({\mathcal H}_K(1/n)) \ge c_{11}c_{12}\lambda(K^*(1/c_{13}n)).\]
By \cite[Th. 2]{BL88},
\[ \lambda(K^*(\epsilon))\ge c_{14}\epsilon \ln^{d-1}(1/\epsilon) \]
for $\epsilon>0$. This yields the left inequality of (\ref{5}).

The right inequality of (\ref{6a}) relies heavily on the technique of Macbeath regions (see B\'{a}r\'{a}ny \cite{Bar99, Bar06} for expositions of this technique and its applications), which does not dualize in an obvious way. The proof of the right inequality of (\ref{5}) can, however, be deduced from Theorem \ref{t2}. The latter is a counterpart to Groemer's \cite{Gro74} inequality, which says that ${\mathbb E}V(K_n)/V(K)$ is minimal if $K$ is an ellipsoid. In the subsequent proof of Theorem \ref{t2}, `dualization' becomes a bit vague: Steiner symmetrization, which is a tool in Groemer's proof, is replaced by Minkowski symmetrization.

Let $h(K,\cdot)$ be the support function of $K$. By (\ref{8b}) and the definition of the measure $\mu_K$, we have
\[ \mu_K(A) =\frac{1}{\sigma(S^{d-1})} \int_{S^{d-1}} \int_0^1 {\bf 1}\{H(u,h(K,u)+t)\in A\}\,\D t\,\sigma(\D u) \]
for Borel sets $A\subset {\mathcal H}_K$. We write $H^-(u,t):=\{x\in{\mathbb R}^d:\langle x,u\rangle\le t\}$ and use the abbreviations $U:=(u_1,\dots,u_n)$, $T:=(t_1,\dots,t_n)$ and
\[ P(K,U,T):= H^-(u_1,h(K,u_1)+t_1) \cap \ldots\cap H^-(u_n,h(K,u_n)+t_n) \cap K_1.\]
Then we get
\begin{align*}
{\mathbb E}W(K^{(n)})
& =  \int\dots\int W(H_1^-\cap\ldots\cap H_n^-\cap K_1)\,\mu_K(\D H_1)\cdots\mu_K(\D H_n)\\
& = \left(\frac{1}{\sigma(S^{d-1})}\right)^n\int_{(S^{d-1})^n} \int_{[0,1]^n}
W(P(K,U,T))\, \D T\,\sigma^n(\D U).
\end{align*}

Let $K,M\subset {\mathbb R}^d$ be two convex bodies. Let $\alpha\in [0,1]$ and $x\in (1-\alpha)P(K,U,T)+\alpha P(M,U,T)$. Then 
$x=(1-\alpha)y+\alpha z$ with $y\in P(K,U,T)$ and $z\in P(M,U,T)$. For each $i\in \{1,\dots,n\}$, we have $\langle y,u_i\rangle \le h(K,u_i)+t_i$ and $\langle z,u_i\rangle \le h(M,u_i)+t_i$, hence
$$
\langle x,u_i \rangle \le (1-\alpha)(h(K,u_i)+t_i)+\alpha(h(M,u_i)+t_i)
= h((1-\alpha)K+\alpha M,u_i)+t_i.
$$
Since also $x\in K_1$, we see that $x\in P((1-\alpha)K+\alpha M,U,T)$. This shows that
\[ (1-\alpha)P(K,U,T) +\alpha P(M,U,T) \subset P((1-\alpha)K+\alpha M,U,T)\]
and hence that
\[ W(P((1-\alpha)K+\alpha M,U,T)) \ge (1-\alpha)W(P(K,U,T)) +\alpha W(P(M,U,T)).\]
Inserting this in the representation of ${\mathbb E}W(K^{(n)})$, we obtain
\[ {\mathbb E}W([(1-\alpha)K+\alpha M]^{(n)}) \ge (1-\alpha){\mathbb E}W(K^{(n)})  +\alpha{\mathbb E}W(M^{(n)}).\]
Thus, the function $K\mapsto {\mathbb E}W(K^{(n)})$ is concave with respect to Minkowski addition, and it is clearly invariant under rigid motions and continuous with respect to the Hausdorff metric. Now the following standard argument shows that on the set of convex bodies of given mean width, the function ${\mathbb E}W(K^{(n)})$ attains its maximum at the balls. A {\em rotation mean} of $K$ is every convex body of the form $K'=m^{-1}(\delta_1 K+\ldots+\delta_m K)$ with  $m\in{\mathbb N}$ and rotations $\delta_1,\dots,\delta_m$ of ${\mathbb R}^d$. By the concavity shown above and the linearity of the mean width, we have ${\mathbb E}W((K')^{(n)})\ge {\mathbb E}W(K^{(n)})$ and $W(K')=W(K)$. By a theorem of Hadwiger (see \cite[Theorem 3.3.2]{Sch93}), there is a sequence of rotation means of $K$ converging to a ball $B$. This ball satisfies ${\mathbb E}W(B^{(n)})\ge{\mathbb E}W(K^{(n)})$ and $W(B)=W(K)$. We can write the result as
\begin{equation}\label{9}
{\mathbb E}W(K^{(n)})-W(K)\le\frac{W(K)}{2}[{\mathbb E}W((B^d)^{(n)})-W(B^d)],
\end{equation}
which proves Theorem \ref{t2}. The right side is of order $n^{-2/(d+1)}$ as $n\to\infty$. This can be deduced from (\ref{polar}) below for $K=B^d$, once the analogous result for the convex hull of independent, identically distributed points in the ball $B^d$ is known, for the case where the Lebesgue measure (yielding the distribution of the points and the volume functional) is replaced by Lebesgue measure with a density that is continuous in a neighborhood of ${\rm bd}B^d$ and constant on ${\rm bd}B^d$. Such a result, in turn, is obtained by a straightforward extension of the Lebesgue measure case, first treated by Wieacker \cite{Wie78} and generalized by Affentranger \cite{Aff91}. This completes the proof of Theorem \ref{t1}.

\section{Polarity and a useful functional}
\label{sec3}

In our preparations for the proof of Theorem \ref{t3}, we make use of the mapping $\varphi$ defined by (\ref{map}). We assume that $K$ is a convex body containing the origin $o$ in its interior. The same holds then for its polar body $K^*$. We define
$$ X_K := {\rm cl}\,(K^*\setminus K_1^*),$$
thus $\varphi(X_K)={\mathcal H}_K$. Writing $\mu^*,\mu_K^*$ for  the image measures of $\mu, \mu_K$, respectively,  under $\varphi^{-1}$, we have
$$
\mu_K^*(A)=\frac1{\sigma(S^{d-1})}\int_A\|x\|^{-(d+1)}\,\D x 
$$
for any Borel set $A\subset X_K$. For a convex body $M$ containing $K$ it is clear (and well known, see Glasauer and Gruber \cite{GlG97}) that
$$
W(M)-W(K)=\frac2{\sigma(S^{d-1})}\int_{K^*\setminus M^*}\|x\|^{-(d+1)}\,\D x.
$$

By $K_n^*$ we denote the convex hull of $K_1^*$ and $n$ independent random points in $X_K$ with distribution $\mu_K^*$. Thus, $K_n^*$ is stochastically equivalent to the polar body of $K^{(n)}$, and we have
\begin{equation}
\label{polar}
\mathbb{E}W(K^{(n)})-W(K)=\frac2{\sigma(S^{d-1})}
\mathbb{E}\int_{K^*\setminus K_n^*}\|x\|^{-(d+1)}\D x=2\mathbb{E}\,\mu^*_K(K^*\setminus K_n^*).
\end{equation}

For $H_1,\ldots,H_n\in{\mathcal H}_K$, we say that a $(d-1)$-dimensional convex compact set $F$ is a {\em proper facet} of $K^{(n)}= \bigcap_{i=1}^n H_i^-\cap K_1$, if $F=H_i\cap K^{(n)}$ for some $H_i$ which is a supporting hyperplane of $K^{(n)}$ intersecting ${\rm int}\,K_1$. Further, $v$ is a {\em proper vertex} of $K^{(n)}$ if $v\in{\rm int}\,K_1$ and $\{v\}$ is the intersection of the proper facets of $K^{(n)}$ containing $v$. We write $f_0(K^{(n)})$ and $f_{d-1}(K^{(n)})$ for the number of proper vertices and facets, respectively, of $K^{(n)}$. 

Next, let $K^*_n=[K_1^*,x_1,\ldots,x_n]$ for $x_1,\ldots,x_n\in X_K$. We say that a $(d-1)$-dimensional convex compact set $F$ is a {\em proper facet} of $K^*_n$ if $K_1^* \cap {\rm aff}\,F =\emptyset$ and $F$ is the intersection of $K^*_n$ and a supporting hyperplane of $K^*_n$. Further, some $x_i$ is a {\em proper vertex} of $K^*_n$ if $x_i\notin K_1^*$ and $\{x_i\}$ is the intersection of $K^*_n$ and a supporting hyperplane of $K^*_n$. We write $f_0(K^*_n)$ and $f_{d-1}(K^*_n)$ to denote the number of proper vertices and facets, respectively, of $K^*_n$. If $K^{(n)} =\bigcap_{i=1}^n \varphi(x_i)^-\cap K_1$, then $\varphi$ defines bijective correspondences between the proper vertices of $K^*_n$
and proper facets of $K^{(n)}$, and between the proper facets of $K^*_n$
and proper vertices of $K^{(n)}$.

As we have seen in Section \ref{sec2}, there exists a number $\gamma\in(0,1)$ depending only on $K$ such that with
probability at least $1-O(\gamma^n)$  we have $K_1^*\subset {\rm int}\,K^*_n$. In this case, $K^*_n$ is
a polytope with vertices among the $n$ random points determining $K^*_n$.

Now we assume that $P$ a simplicial polytope (with $o$ in its interior), then $P^*$ is a simple polytope. Similarly as in Affentranger and Wieacker \cite{AW91}, we consider the function $T^*_q(P^*_n)$ below, where $q\geq 0$ and
$P^*_n =[P_1^*,x_1,\ldots,x_n]$ for $x_1,\ldots,x_n\in X_P$.

If $F$ is a $(d-1)$-dimensional convex set whose 
affine hull intersects $P^*$ and avoids $P_1^*$ then let $v_F$ be a vertex of $P^*$ 
which is separated from $P_1^*$ by ${\rm aff}\,F$, and where there exists
a supporting hyperplane to $P^*$ parallel to ${\rm aff}\,F$. Further, let $S_F=[F,v_F]$. We write ${\mathcal F}(P^*_n)$ to denote the family of proper facets of $P^*_n$, and we define
$$
T^*_q(P^*_n):=\sum_{F\in {\mathcal F}(P^*_n)}\mu^*_P(S_F)^q.
$$
The functionals $T^*_q(P^*_n)$ are closely related to our problem because  $f_{d-1}(P^*_n)=T_0(P^*_n)$, and we will prove  $\mathbb{E}\,\mu^*_P(P^*\setminus P^*_n)\sim\mathbb{E}\,T^*_{1}(P^*_n)$ in Section \ref{sec4}.
Now the core of the arguments leading to Theorem \ref{t3} is the following lemma. For this, we need some notation.
The $(d-1)$-dimensional Lebesgue measure is denoted by $\lambda_{d-1}$. 
For $q\geq 0$ and a $(d-1)$-dimensional compact convex set $A$, let
\begin{equation}\label{M}
M_q(A)=\lambda_{d-1}^{-d-q}(A)\int_{A^d}
\lambda_{d-1}([x_1,\ldots,x_d])^{q}\,\lambda_{d-1}^{d}(\D(x_1,\dots,x_d)).
\end{equation}
Let $\Delta_{d-1}$ be a fixed $(d-1)$-dimensional simplex, then
$M_q(A)=M_q(\Delta_{d-1})$ for any $(d-1)$-dimensional simplex $A$,
by affine invariance. For an arbitrary $(d-1)$-dimensional compact convex set $A$,
there exists a $(d-1)$-dimensional simplex $B\subset A$ such that
$A$ is contained in a translate of $-(d-1)B$, therefore
\begin{equation}
\label{Mqest}
M_q(A)\leq (d-1)^{(d-1)(d+q)}M_q(\Delta_{d-1}).
\end{equation}

\begin{lem}\label{lem1}
If $q\geq 0$ is an integer and $P$ is a simplicial polytope in ${\mathbb R}^d$ with $r$ facets, then, as $n$ tends to infinity, 
$$
\mathbb{E}\,T^*_q(P^*_n)\sim\frac{r(d+q-1)!d^{d-1}M_{q+1}(\Delta_{d-1})}{(d-1)!^2}\,\frac{\ln^{d-1}n}{n^q}. 
$$
\end{lem}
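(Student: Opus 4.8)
The plan is to compute the expectation $\mathbb{E}\,T^*_q(P^*_n)$ by summing over the vertices of $P^*$ and analyzing the contribution of proper facets of $P^*_n$ that ``cap off'' each vertex. Since $P$ is simplicial, $P^*$ is simple, so each vertex $v$ of $P^*$ lies in exactly $d$ facets of $P^*$ and has a simplicial vertex cone. There are $r$ vertices of $P^*$ (one per facet of $P$). A proper facet $F$ of $P^*_n$ has an associated vertex $v_F$ of $P^*$ that it separates from $P_1^*$; I would localize the integral near each vertex $v$, writing $\mathbb{E}\,T^*_q(P^*_n) = \sum_v \mathbb{E}\sum_{F:\,v_F=v}\mu^*_P(S_F)^q$, and show that contributions from facets not cleanly associated to a single vertex, or overlapping several vertex regions, are of lower order. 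Near a single vertex $v$, after an affine change of coordinates the density $\|x\|^{-(d+1)}$ defining $\mu^*_P$ is, to leading order, a positive constant times Lebesgue measure, so the problem reduces to the Affentranger--Wieacker \cite{AW91} asymptotics for random polytopes in a simplicial cone (equivalently, in a neighborhood of a vertex of a simple polytope).

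The key computational step is the standard Efron-type / Blaschke--Petkantschin integral-geometric identity. A proper facet $F$ of $P^*_n$ is spanned by $d$ of the random points $x_{i_1},\dots,x_{i_d}$, with all remaining points on the near side of $\operatorname{aff}F$. I would write
\begin{align*}
\mathbb{E}\sum_{F:\,v_F=v}\mu^*_P(S_F)^q
&= \binom{n}{d}\int\!\!\cdots\!\!\int \mu^*_P(S_F)^q\,
\big(1-\mu^*_P(C_F)\big)^{n-d}\,
\prod_{j=1}^d \mu^*_P(\mathrm{d}x_{i_j}),
\end{align*}
where $C_F$ is the cap of $X_P$ cut off by $\operatorname{aff}F$ on the side away from $v$ (whose measure controls the probability that no further point lies beyond $F$). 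Here $S_F=[F,v_F]$, and its measure $\mu^*_P(S_F)$ is, to leading order near $v$, proportional to $\lambda_{d-1}(F)$ times a power of the height, via the simplex volume $\lambda_{d-1}([x_{i_1},\dots,x_{i_d}])$ appearing in $M_{q+1}$. Substituting the affine normalization at $v$ and passing to the Affentranger--Wieacker scaling (which is what produces the $\ln^{d-1}n$ factor from the $d$ edge-directions of the simplicial cone at $v$), the $d$-fold point integral produces the factor $M_{q+1}(\Delta_{d-1})$ together with the combinatorial constants $(d+q-1)!\,d^{d-1}/(d-1)!^2$, and the cap-probability integral $\int(1-\mu^*_P(C_F))^{n-d}$ contributes the $n^{-q}$ decay and logarithmic power. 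Summing the $r$ identical vertex contributions yields the stated constant $r$.

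Concretely, I would carry out the steps in this order. First, reduce globally to a sum over the $r$ vertices of $P^*$ and discard lower-order cross terms, using that with probability $1-O(\gamma^n)$ we have $P_1^*\subset\operatorname{int}P^*_n$ and that proper facets concentrate near the vertices of $P^*$ as $n\to\infty$. Second, fix one vertex $v$, choose affine coordinates adapting the simple vertex cone to a standard orthant and replacing $\mu^*_P$ by constant-density Lebesgue measure up to a $(1+o(1))$ factor. Third, apply the Blaschke--Petkantschin formula to decompose the $d$-fold integral over the spanning points into an integral over the affine hull $\operatorname{aff}F$ (contributing $M_{q+1}(\Delta_{d-1})$ and the Jacobian $\lambda_{d-1}([x_1,\dots,x_d])$) times an integral over hyperplane positions. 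Fourth, evaluate the hyperplane-position integral asymptotically à la Affentranger--Wieacker, extracting $\ln^{d-1}n$ from the product structure of the cone and $n^{-q}$ from the $q$-th power weight against the binomial $(1-\mu^*_P(C_F))^{n-d}\sim e^{-n\mu^*_P(C_F)}$.

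The main obstacle I expect is the second-to-last step: justifying that near a vertex of the simple polytope $P^*$, the random polytope $P^*_n$ behaves asymptotically like a random polytope in a polyhedral cone with smooth positive density, so that the constant-density Affentranger--Wieacker computation applies verbatim (with the correct handling of the boundary $K_1^*$ and of the weight $\mu^*_P(S_F)^q$ rather than volume). Controlling the error from replacing $\|x\|^{-(d+1)}$ by its value at $v$, and confirming that the dominant contribution genuinely comes from facets spanned by points close to a single vertex rather than from ``spread-out'' facets, is where the real work lies; everything downstream is the standard integral-geometric asymptotic extraction that produces the explicit constants.
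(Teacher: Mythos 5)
Your proposal follows essentially the same route as the paper: localize at the $r$ vertices of the simple polytope $P^*$, write the Efron-type integral with the factor $\binom{n}{d}(1-\mu^*_P(C_F))^{n-d}$, normalize the vertex cone to the standard orthant so the density $\|x\|^{-(d+1)}$ becomes constant up to a factor $1+\varepsilon$, apply the Blaschke--Petkantschin formula to extract $M_{q+1}(\Delta_{d-1})$, and finish with the Affentranger--Wieacker integral asymptotics. The obstacle you single out (ruling out ``spread-out'' facets) is handled in the paper by decomposing the facet count according to the number $p\in\{0,\dots,d\}$ of edge directions at the vertex that $\operatorname{aff}F$ crosses, with $p=d$ giving the main term and $p<d$ contributing only $O(\ln^{p-1}n/n^q)$ --- a detail of execution rather than a different method.
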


\begin{proof}
To prove Lemma \ref{lem1}, it is sufficient to verify for any $\varepsilon>0$ the existence of $n_0$ and $\Gamma$ depending on $\varepsilon$, $q$ and $P$ such that, if $n>n_0$, then
\begin{align}\label{lower}
\mathbb{E}\,T^*_q(P^*_n) & > \frac{1}{(1+\varepsilon)^{2d+2q}}\frac{r(d+q-1)!d^{d-1}M_{q+1} (\Delta_{d-1})} { (d-1)!^2}\, \frac{\ln^{d-1}n}{n^q} -\frac{\Gamma\ln^{d-2}n}{n^q},\\
\label{upper}
\mathbb{E}\,T^*_q(P^*_n) &< (1+\varepsilon)^{2d+2q} \frac{r(d+q-1)! d^{d-1}M_{q+1} \Delta_{d-1})}{(d-1)!^2}\, \frac{\ln^{d-1}n}{n^q} +\frac{\Gamma\ln^{d-2}n}{n^q}.
\end{align}
In the rest of this section, we write $\Gamma_1,\Gamma_2,\ldots$ to denote constants that may depend on $\varepsilon$, $q$ and $P$.

Many calculations are simpler if we do them with respect to an orthonormal basis, therefore we introduce some notation.
Let $e_1,\ldots,e_d$ be an orthonormal basis of ${\mathbb R}^d$, and let $\widetilde{\Omega}=[o,e_1,\ldots,e_d]$. For $p=0,\ldots,d$, and a $(d-1)$-dimensional convex set $F$, we define
$$
\tilde{\theta}^p_{F}=\left\{
\begin{array}{ll}
1&\mbox{ if aff\,$F$ intersects each open ray ${\mathbb R}_+e_i$, $i=1,\ldots,d$,} \\
&\mbox{ and aff$\,F$ separates $p$ points out of  $e_{1},\ldots,e_{d}$ from $o$},\\
0&\mbox{ otherwise. }
\end{array} \right.
$$
In addition, we define $\widetilde{S}_F=[o,F]$, and $\tilde{\eta}_F$ denotes the distance of ${\rm aff}\,F$ from $o$. Moreover, let $\widetilde{C}_F$ be the simplex cut off by ${\rm aff}\,F$ from $\sum_{i=1}^d{\mathbb R}_{\geq 0}e_i$ if ${\rm aff}\,F$ intersects each open ray ${\mathbb R}_+e_i$, $i=1,\ldots,d$, and let $\widetilde{C}_F = \widetilde {\Omega}$ otherwise. For $s=(s_1,\ldots,s_d)\in{\mathbb R}_+^d$, we write $H(s)$ for the hyperplane $H$ that contains the points $s_ie_i$ for $i=1,\ldots,d$. It follows that (see also \cite[p. 298--299]{AW91})
\begin{equation} \label{volwiths}
V(\widetilde{C}_{H(s)})=s_1\cdots s_d/d!,
\end{equation}
\begin{equation} \label{changofvar}
\D \mu(H(s))=\tilde{\eta}_{H(s)}^{d+1}(s_1\cdots s_d)^{-2}\D s.
\end{equation}
Finally, we recall the lemma in  \cite[p. 296]{AW91}. It says that for integers $k,m\geq 0$ and $p\geq 2$, and for $c \in (0,1]$, we have
\begin{align}
\nonumber
&  \int_{(0,1]^p}(s_1\cdots s_p)^k \left(1-c\,s_1\cdots s_p\right)^{n-m} \,\D (s_1,\dots s_p)\\ 
&  = \frac{k!}{(p-1)!c^{k+1}}\,\frac{\ln^{p-1}n}{n^{k+1}}
\label{integral}
+O\left( \frac{\ln^{p-2}n}{n^{k+1}}\right)
\end{align}
as $n$ tends to infinity, where the implied constant in $O(\cdot)$ depends on $k,m,p,c$. In addition, if $p=1$, then
\begin{equation}
\label{integral1}
\int_0^1s^k(1-c\,s)^{n-m}\,\D s=\frac{k!}{c^{k+1}}\,\frac{1}{n^{k+1}}
+O\left( \frac{1}{n^{k+2}}\right).
\end{equation}

We choose a number $\omega>0$ with the following four properties:
\begin{itemize}
\item Any edge of $P^*$ is of length at least $3\omega$.
\item If $y$ is a vertex of $P^*$ and $w_{y,1},\ldots,w_{y,d}$ are
the points on the $d$ edges of $P^*$ meeting at $y$ such that
$\|w_{y,i}-y\|=\omega$, then $\Omega_y:=[y,w_{y,1},\ldots,w_{y,d}]$
is disjoint from $P_1^*$.
\item If $y$ is a vertex of $P^*$ and $x\in \Omega_y$, then
$$
(1+\varepsilon)^{-1}\|y\|^{-d-1}\leq \|x\|^{-d-1}\leq (1+\varepsilon)\|y\|^{-d-1}.
$$
\item  If $y$ is a vertex of $P^*$, then $(1+\varepsilon)\sigma(S^{d-1})^{-1}\|y\|^{-d-1}V(\Omega_y)<1$.
\end{itemize}

Let $y$ be a vertex of $P^*$, which we keep fixed until (\ref{T0}).  
For $p=0,\ldots,d$, and a $(d-1)$-dimensional convex set $F$, we define
$$
\theta^p_{y,F}=\left\{
\begin{array}{ll}
1&\mbox{ if $P$ has a supporting hyperplane at $y$ that is parallel to ${\rm aff}\,F$, and}\\
&\mbox{ aff$\,F$ separates $y$ from $K_1^*$ and
from  $p$ points out of  $w_{y,1},\ldots,w_{y,d}$},\\
0&\mbox{ otherwise. }
\end{array} \right.
$$
We write $\Phi_y$ to denote the linear map with $\Phi_ye_i=w_{y,i}-y$ for $i=1,\ldots,d$, and
hence $\det \Phi_y=d!V(\Omega_y)$. Let
$$
T^p_{q,y}(P^*_n)=\sum_{F\in{\mathcal F}(P^*_n)}\mu^*_P(S_F)^q \theta^p_{y,F}.
$$
A standard argument yields 
\begin{align}
\label{Tp}
\mathbb{E}\,T^p_{q,y}(P^*_n) &= \binom{n}{d} \int_{X_P^d}\mu^*_P(S_{[x_1,\ldots,x_d]})^q\left(1-\mu^*_P(C_{[x_1,\ldots,x_d]})\right)^{n-d} \nonumber\\
&  \hspace*{4mm} \times\, \theta^p_{y,[x_1,\ldots,x_d]} \,\D \mu^*_P(x_1)\cdots \D \mu^*_P(x_d),
\end{align}
where $C_{[x_1,\ldots,x_d]}$ denotes the part of $P^*$ containing $y$ that is cut off by ${\rm aff}\{x_1,\dots,x_d\}$.

We start with the case $p=d$. It follows by the definition 
of $\omega$ that, writing $\beta:=\sigma(S^{d-1})\|y\|^{d+1}$, we have
\begin{align}
\label{ETd1}
\mathbb{E}\,T^d_{q,y}(P^*_n)&\geq \binom{n}{d}(1+\varepsilon)^{-(d+q)}\beta^{-(d+q)}\int_{\Omega_y^d}
V(S_{[x_1,\ldots,x_d]})^q\\
\label{ETd2}
&  \hspace*{4mm} \times\left(1-\frac{(1+\varepsilon)V(C_{[x_1,\ldots,x_d]})}{\beta}\right)^{n-d} 
\theta^d_{y,[x_1,\ldots,x_d]}\,\D x_1\cdots \D x_d.
\end{align}
We write $I^d$ to denote the integral in (\ref{ETd2}).  Applying $\Phi^{-1}$, we deduce that
\begin{align*}
I^d &= \left(d!V(\Omega_y)\right)^{q+d} \int_{\widetilde{\Omega}^d}
V(\widetilde{S}_{[x_1,\ldots,x_d]})^q
\left(1-\frac{d!V(\Omega_y)(1+\varepsilon)V(\widetilde{C}_{[x_1,\ldots,x_d]})}{\beta}\right)^{n-d} \\
&  \hspace*{4mm} \times\,\tilde{\theta}^d_{[x_1,\ldots,x_d]}\,\D x_1\cdots \D x_d.
\end{align*}
Following \cite[p. 298--299]{AW91},  we apply first a Blaschke--Petkantschin formula and the definition of $M_q(\cdot)$, then
(\ref{volwiths}) and (\ref{changofvar}),  to obtain
\begin{align*}
I^d&= \left(d!V(\Omega_y)\right)^{q+d}d^{-q}M_{q+1}(\Delta_{d-1}) (d-1)!\\
&   \hspace*{4mm}\times\, \int_{{\mathcal H}} \tilde{\eta}_H^q\lambda_{d-1}^{d+q+1}(H\cap \widetilde{\Omega})
\left(1-\frac{d!V(\Omega_y)(1+\varepsilon)V(\widetilde{C}_H)}{\beta}\right)^{n-d} 
\tilde{\theta}^d_H\,\D \mu(H)\\
&= \left(d!V(\Omega_y)\right)^{q+d}d^{-q}M_{q+1}(\Delta_{d-1}) (d-1)!^{-(d+q)}\\
& \hspace*{4mm}\times\, \int_{(0,1]^d} (s_1\cdots s_d)^{d+q-1}
\left(1-\frac{V(\Omega_y)(1+\varepsilon)}{\beta}\,s_1\cdots s_d\right)^{n-d} 
\,\D (s_1,\ldots, s_d).
\end{align*}
We may apply (\ref{integral}) because $V(\Omega_y)(1+\varepsilon)/\beta<1$ by the choice of $\omega$.  We deduce by (\ref{ETd1}) that
\begin{align}
\nonumber
\mathbb{E}\,T^d_{q,y}(P^*_n)&\geq \binom{n}{d} (1+\varepsilon)^{-(d+q)}\beta^{-(d+q)}\cdot I^d\\
\label{lowerTd}
&= \frac{(d+q-1)!d^{d-1}M_{q+1}(\Delta_{d-1})}{(d-1)!^2(1+\varepsilon)^{2d+2q}}\,\frac{\ln^{d-1} n}{n^q}
-\Gamma_1  \frac{\ln^{d-2}n}{n^q}.
\end{align}
Now a similar argument with the obvious changes leads to
\begin{equation}
\label{upperTd}
\mathbb{E}\,T^d_{q,y}(P^*_n)\leq 
\frac{(1+\varepsilon)^{2d+2q}(d+q-1)!d^{d-1}M_{q+1}(\Delta_{d-1})}{(d-1)!^2}\,\frac{\ln^{d-1} n}{n^q}
+\Gamma_2  \frac{\ln{d-2}n}{n^q}.
\end{equation}

Next, let $p\in\{1,\ldots,d-1\}$. We define $\tau$ to be the smallest number such that $\tau^{-1}\leq \sigma (S^{d-1}) \|x\|^{d+1}\leq\tau$ for $x\in X_P$, and set $P_y:=\Phi_y^{-1}(P-y)$. Starting from (\ref{Tp}), applying $\Phi_y^{-1}$ and then the Blaschke--Petkantschin formula and (\ref{Mqest}), we obtain
\begin{align*}
&  \mathbb{E}\,T^p_{q,y}(P^*_n)\\
&  \le \Gamma_3n^d \int_{X_P^d}
V(S_{[x_1,\ldots,x_d]})^q\left(1-\frac{V(C_{[x_1,\ldots,x_d]}\cap P^*)}{\tau}\right)^{n-d} 
\theta^p_{y,[x_1,\ldots,x_d]}\,\D x_1\cdots \D x_d\\
&  \le\Gamma_4n^d \int_{P_y^d} V(\widetilde{S}_{[x_1,\ldots,x_d]})^q
\left(1-\frac{d!V(\Omega_y)V(\widetilde{C}_{[x_1,\ldots,x_d]}\cap P_y)}{\tau}\right)^{n-d} 
\tilde{\theta}^p_{[x_1,\dots,x_d]}\\
&  \hspace*{4mm}\times\,\D x_1\cdots \D x_d\\
&  \le\Gamma_5n^d \int_{{\mathcal H}}
\tilde{\eta}_H^q\lambda_{d-1}^{d+q+1}(H\cap P_y)
\left(1-\frac{d!V(\Omega_y)V(\widetilde{C}_H\cap P_y)}{\tau}\right)^{n-d} 
\tilde{\theta}^p_H\,\D\mu(H).
\end{align*}
For $s=(s_1,\ldots,s_d)\in{\mathbb R}_+^d$, we have $\tilde{\theta}^p_{H(s)}=1$ if  exactly $p$ coordinates out of $s_1, \ldots,s_d$ are at most one. In particular, we may assume that $s_i\leq 1$ if $i\leq p$, and $s_i>1$ if $i>p$, and hence
$$
\tilde{\eta}_{H(s)}\lambda_{d-1}(H(s)\cap P_y)\leq\Gamma_6s_1\cdots s_p,
$$
$$
V(\widetilde{C}_{H(s)}\cap P_y) \geq \frac{s_1\cdots s_p}{d!}.
$$
First we change the variable as in (\ref{changofvar}), and secondly we apply (\ref{integral}) or (\ref{integral1}) to obtain in the case $p\in\{1,\ldots,d-1\}$ that
\begin{align}
\nonumber
\mathbb{E}\,T^p_{q,y}(P^*_n)&\leq \Gamma_7n^d
\int_{(1,\infty)^{d-p}}\int_{(0,1]^p}
(s_1\cdots s_p)^{d+q+1}
\left(1-\frac{V(\Omega_y)}{\tau}\, s_1\cdots s_p\right)^{n-d} \\
\nonumber
&  \hspace*{4mm}\times \,s_1^{-2}\cdots s_d^{-2}\,\D (s_1,\ldots, s_p)\,\D(s_{p+1},\ldots, s_d)\\
\label{T1d-1}
&\leq \Gamma_8 \frac{\ln^{p-1} n}{n^q}.
\end{align}
Finally, if $p=0$ then (\ref{Tp}) yields
\begin{equation}
\label{T0}
\mathbb{E}\,T^0_{q,y}(P^*_n)\leq 
\Gamma_9  n^d(1-\mu^*_P(\Omega_y))^{n-d}.
\end{equation}
Since $\mathbb{E}\,T^*_{q}(P^*_n)=\sum_{y\;{\rm vertex}\;{\rm of}\;P^*}\sum_{p=0}^d\mathbb{E}\,T^p_{q,y}(P^*_n)$,
combining (\ref{lowerTd}), (\ref{T1d-1}) and (\ref{T0}) leads to (\ref{lower}),
and combining (\ref{upperTd}), (\ref{T1d-1}) and (\ref{T0}) leads to (\ref{upper}).
This completes the proof of Lemma \ref{lem1}.
\end{proof}

\section{The proofs of Theorem \ref{t3}, (\ref{f0}) and (\ref{fd-1})}
\label{sec4}

In this section, the implied constant in $O(\cdot)$ depends on $P$.
Formula (\ref{f0}) readily follows by Lemma \ref{lem1}, since
$$
\mathbb{E}\,f_0(P^{(n)})=\mathbb{E}\,f_{d-1}(P^*_n)=\mathbb{E}\,T^*_{0}(P^*_n).
$$
Following the proofs of Propositions~1 and 2 in  \cite{AW91}, next we verify

\vspace{3mm}

\begin{lem}\label{lem2}
\begin{equation}
\label{T1V}
\mathbb{E}\,T^*_{1}(P^*_n)\leq \mathbb{E}\,\mu^*_P(P^*\setminus P^*_n)\leq\mathbb{E}\,T^*_{1}(P^*_n)
+O\left( \frac{\ln^{d-2}n}n\right).
\end{equation}
\end{lem}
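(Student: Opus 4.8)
The plan is to realize $P^*\setminus P^*_n$, up to a remainder of smaller order, as the \emph{disjoint} union of the inscribed pyramids $S_F$, using that $P^*$ is a simple polytope. I work pointwise for fixed $x_1,\dots,x_n$ with $K_1^*\subset{\rm int}\,P^*_n$ (which fails only with probability $O(\gamma^n)$, and contributes $O(\gamma^n)$ to every expectation below) and take expectations at the end.

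For the left inequality I first record two facts about a single proper facet $F$: (i) $S_F=[F,v_F]\subseteq P^*$, since $F\subseteq P^*_n\subseteq P^*$ and $v_F$ is a vertex of $P^*$; and (ii) ${\rm int}\,S_F\cap P^*_n=\emptyset$, because $v_F$ lies strictly beyond ${\rm aff}\,F$ while $P^*_n$ lies in the opposite closed halfspace. Hence each $S_F\subseteq\overline{P^*\setminus P^*_n}$. The crux is that the simplices $S_F$, $F\in{\mathcal F}(P^*_n)$, have pairwise disjoint interiors. Grouping facets by apex, the facets $F$ with common $v_F=v$ are exactly those proper facets with ${\rm aff}\,F$ separating $v$ from $K_1^*$ and whose outer normal lies in the normal cone of $P^*$ at $v$; the pyramids $[F,v]$ are the cells of the cone from $v$ over the corresponding (triangulated) piece of ${\rm bd}\,P^*_n$, so they tile that cone without overlap. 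For distinct apexes $v\ne v'$ the normal cones of $P^*$ meet only in their boundaries, so the pyramids fan out into separated normal directions and cannot share an interior point. Disjointness and containment, together with additivity of $\mu^*_P$, give
$$
T^*_1(P^*_n)=\sum_{F\in{\mathcal F}(P^*_n)}\mu^*_P(S_F)=\mu^*_P\Big(\bigcup_F S_F\Big)\le\mu^*_P(P^*\setminus P^*_n),
$$
which is the left inequality of (\ref{T1V}).

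For the right inequality I must bound the uncovered gap $G:=(P^*\setminus P^*_n)\setminus\bigcup_F S_F$. By the tiling just described, over each vertex $v$ the pyramids fill the cone from $v$ over the visible part of ${\rm bd}\,P^*_n$ exactly up to its lateral boundary; thus $G$ consists only of points of $P^*\setminus P^*_n$ lying outside every such vertex-cone, that is, over the seams where the normal-cone assignment switches between adjacent vertices. Geometrically these points cluster along the edges and lower-dimensional faces of $P^*$, which is precisely the configuration recorded by the terms $\theta^p_{y,F}$ with $p<d$ in the proof of Lemma \ref{lem1}. The plan is to cover $G$ by the caps $C_F$ of those facets $F$ for which ${\rm aff}\,F$ separates $v_F$ together with fewer than $d$ of the auxiliary points $w_{v_F,1},\dots,w_{v_F,d}$ from $K_1^*$, to estimate $\mu^*_P(C_F)\le\Gamma\,\mu^*_P(S_F)$ using the boundedness of the density $\|x\|^{-(d+1)}$ on $X_P$ and (\ref{Mqest}), and then to run the Blaschke--Petkantschin reduction with (\ref{integral}) and (\ref{integral1}) exactly as in the derivation of (\ref{T1d-1}) and (\ref{T0}). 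This bounds $\mathbb{E}\,\mu^*_P(G)$ by $\Gamma\sum_{y}\sum_{p=0}^{d-1}\mathbb{E}\,T^p_{1,y}(P^*_n)=O(\ln^{d-2}n/n)$, giving the right inequality of (\ref{T1V}).

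The main obstacle is this gap estimate. One must show that the portion of $P^*\setminus P^*_n$ missed by the vertex pyramids is genuinely concentrated near the $(d-2)$-skeleton of $P^*$, so that it is governed by the lower-order ($p<d$) contributions and loses one power of $\ln n$ relative to the main term $\ln^{d-1}n/n$; in particular one must verify that away from the edges no constant fraction of each cap $C_F$ escapes the pyramids, since otherwise the error would be of the same order as $\mathbb{E}\,T^*_1(P^*_n)$ itself. This is where the local structure of the simple polytope and the careful bookkeeping of Lemma \ref{lem1} are indispensable. The left inequality, by contrast, is a soft consequence of disjointness and additivity and should require no quantitative estimate.
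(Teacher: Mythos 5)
Your left inequality is fine and is exactly the paper's argument: the pyramids $S_F$, $F\in{\mathcal F}(P^*_n)$, have pairwise disjoint interiors and are contained in ${\rm cl}(P^*\setminus P^*_n)$, so additivity of $\mu^*_P$ gives $T^*_1(P^*_n)\le\mu^*_P(P^*\setminus P^*_n)$.

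The right inequality, however, has a genuine gap, and you correctly identify its location but do not close it. Two specific problems. First, the covering claim is unsubstantiated and looks false as stated: a point $z\in(P^*\setminus P^*_n)\setminus\bigcup_F S_F$ can lie in the cap $C_F$ of a facet $F$ with $p=d$ (namely in $C_F\setminus S_F$, the part of the cap lying over $({\rm aff}\,F\cap P^*)\setminus F$) without being beyond the affine hull of any low-$p$ facet; nothing in your argument forces the uncovered gap into the low-$p$ caps. Second, the inequality $\mu^*_P(C_F)\le\Gamma\,\mu^*_P(S_F)$ cannot hold in general: $C_F$ is determined by ${\rm aff}\,F$ alone, while $S_F=[F,v_F]$ scales with $\lambda_{d-1}(F)$, and $F$ may be a tiny simplex sitting inside a large cross-section ${\rm aff}\,F\cap P^*$; boundedness of the density and (\ref{Mqest}) do not help here. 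The paper's proof avoids both issues by a different mechanism: an Efron-type step converts $\mathbb{E}\,\mu^*_P(P^*\setminus P^*_n)$ into $\frac1{n+1}$ times the expected number of proper vertices of $P^*_{n+1}$; the new vertex $x_{n+1}$ is then classified according to whether $N(P^*_{n+1},x_{n+1})$ is contained in a vertex normal cone $N(P^*,y)$ --- in which case the ray from $y$ through $x_{n+1}$ shows $x_{n+1}\in S_F$ for some facet $F$ of $P^*_n$, and these vertices contribute exactly $\mathbb{E}\,T^*_1(P^*_n)$ --- or meets an edge normal cone $N(P^*,e)$, in which case the simplex $\Xi_{x_{n+1}}$ (built with the edge direction frozen at $w_{y,d}$, so with only $d-1$ free small parameters $s_1,\dots,s_{d-1}$) must avoid the other $n$ points, and (\ref{integral}), (\ref{integral1}) give $\mathbb{E}\,f_{0,e}(P^*_{n+1})=O(\ln^{d-2}n)$. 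It is this loss of one free parameter along each edge, not the low-$p$ facet counts from Lemma \ref{lem1}, that produces the $O(\ln^{d-2}n/n)$ error term; your sketch has no substitute for it.
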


\vspace{3mm}

{\em Proof.} The lower bound in (\ref{T1V}) is a consequence of the fact that the interiors of the sets $S_F$ are pairwise disjoint as $F$ runs through ${\mathcal F}(P^*_n)$.

The upper bound  in (\ref{T1V}) is proved in several steps. For any convex body $Q$ and $z\in \partial Q$,
let
$$
N(Q,z)=\{u\in{\mathbb R}^d:\,\langle u,x-z\rangle\leq 0\mbox{ for all }x\in Q\}. 
$$
If $Q$ is a polytope and $e$ is an edge of $Q$, then  $N(Q,z)$ is the same $(d-1)$-dimensional cone for any $z$ in the relative interior of $e$, which cone we denote by $N(Q,e)$. In addition, $N(Q,z)$ is $d$-dimensional if $Q=P^*_n$ and $z$ is a proper vertex, or $Q=P^*$ and $z$ is a vertex. For a given $P^*_n$, and for a vertex $y$ and an edge $e$ of $P^*$, we write $f_{0,y}(P^*_n)$ and $f_{0,e}(P^*_n)$ to denote the number of proper vertices $x$ of $P^*_n$ such that $N(P^*_n,x) \subset N(P^*,y)$, respectively that $N(P^*_n,x)$ intersects $N(P^*,e)$. Since for any $z\in P^* \setminus P^*_n$, the cone $N([z,P^*_n],z)$ either is contained
in $N(P^*,y)$ for some vertex $y$ of $P^*$, or intersects
 $N(P^*,e)$ for some edge $e$ of $P^*$, we have
$$
\mathbb{E}\,\mu^*_P(P^*\setminus P^*_n)
\leq\frac1{n+1}\sum_{y\;{\rm vertex}\;{\rm of}\;P^*}\mathbb{E}\,f_{0,y}(P^*_{n+1})
+\frac1{n+1}\sum_{e\;{\rm edge}\;{\rm of}\;P^*}\mathbb{E}\,f_{0,e}(P^*_{n+1}).
$$
Let us consider $x_1,\ldots,x_{n+1}\in X_P$ such that $x_{n+1}$  is a proper vertex of $P^*_{n+1}=[x_1, \ldots, x_{n+1}, P^*_1]$, $N(P^*_{n+1},x_{n+1})\subset N(P^*,y)$ for some vertex $y$ of $P^*$, and $P^*_1\subset {\rm int}\, P^*_n$ for $P^*_{n}=[x_1,\ldots,x_{n},P^*_1]$. In this case, the ray starting from $y$ and passing through $x_{n+1}$ enters into $P^*_{n}$ intersecting a (proper) facet $F$ of $P^*_n$, and $x_{n+1}\in S_F$. Since the probability that $P^*_1\subset {\rm int}\, P^*_n$ is at least $1-O(\gamma^n)$ for some $\gamma\in(0,1)$, we deduce that
\begin{equation}
\label{T1edges}
\mathbb{E}\,\mu^*_P(P^*\setminus P^*_n)
\leq\mathbb{E}\,T^*_{1}(P^*_n)+\frac1{n+1}\sum_{e\;{\rm edge}\;{\rm of}\;P^*}\mathbb{E}\,f_{0,e}(P^*_{n+1})
+O((n+1)\gamma^n).
\end{equation}
Therefore, we fix an edge $e$ of $P^*$ and estimate $\mathbb{E}\,f_{0,e}(P^*_{n+1})$. Let $y$ be one of the endpoints
of $e$. From here on, we use the notation set up in the proof of Lemma \ref{lem1}. We may assume that $w_{y,d}\in e$.
For $x=y+\sum_{i=1}^ds_i(w_{y,i}-y)$ with $s_1,\ldots,s_d\geq 0$,
let
$$
\Xi_x:=[y,w_{y,d},y+\min\{s_1,1\}(w_{y,1}-y),\ldots,y+\min\{s_{d-1},1\}(w_{y,d-1}-y)]
\subset\Omega_y.
$$
In particular, $V(\Xi_x)=\min\{s_i,1\}\cdots \min\{s_{d-1},1\}V(\Omega_y)$. In addition, if $P^*_{n+1} = [x_1,\ldots,x_{n+1}, P^*_1]$ for $x_1,\ldots,x_{n+1}\in X_P$, if $x_{n+1}$ is a proper vertex of $P^*_{n+1}$, and if $N(P^*_{n+1},x_{n+1})$ intersects $N(P^*,e)$, then $\Xi_{x_{n+1}}$ is disjoint from ${\rm int}\,P^*_{n+1}$. 
Considering the number $p$ of the numbers $s_1,\ldots,s_{d-1}$ that
are at most one, we have
\begin{align*}
&  \mathbb{E}\,f_{0,e}(P^*_{n+1})
\le (n+1)\int_{P^*}(1-\mu^*_P(\Xi_x))^n \D \mu^*_P(x)\\
&  \le(n+1)(1-\mu^*_P(\Omega_y))^n
+O(n)\sum_{p=1}^{d-1}\int_{(0,1]^p}\left(1-\frac{V(\Omega_y)}{\tau}\,s_1\cdots s_p\right)^n
\D(s_1,\ldots,s_p).
\end{align*}
Since for $p=1,\ldots,d-1$, the last integral is $O(\frac{\ln^{p-1}n}n)$ according to (\ref{integral}) and (\ref{integral1}),
we conclude that $\mathbb{E}\,f_{0,e}(P^*_{n+1})=O(\ln^{d-2}n)$. Therefore, (\ref{T1edges}) yields Lemma \ref{lem2}.
\hfill \qed

We note that $M_2(\Delta_{d-1})=(d-1)!/d^{d-1}(d+1)^{d-1}$ according to Reed \cite{R74}, and hence 
$$\mathbb{E} \,T^*_{1}(P^*_n)\sim\frac{rd}{(d+1)^{d-1}}\,\frac{\ln^{d-1} n}{n}$$ 
by Lemma \ref{lem1}. Therefore, combining (\ref{polar}) and Lemma \ref{lem2} yields Theorem \ref{t3}. For $x,x_1,\ldots,x_n\in X_P$, the point $x$ is a proper vertex of $[x,x_1,\ldots,x_{n-1},P_1^*]$ if and only if $x \notin [x_1,\ldots,x_{n-1},P_1^*]$. We conclude (see also Efron \cite{E65}) that
$$
\mathbb{E}\,f_{d-1}(P^{(n)})=\mathbb{E}\,f_{0}(P^*_n)=n\mathbb{E}\,\mu^*_P(P^*\setminus P^*_{n-1})
\sim \frac{rd}{(d+1)^{d-1}}\,\ln^{d-1}n
$$
and thus assertion (\ref{fd-1}) holds.

\vspace{3mm}

\small

{\noindent K\'aroly J. B\"or\"oczky,
Alfr\'ed R\'enyi Institute of Mathematics\\
Hungarian Academy of Sciences,
PO Box 127,
H--1364 Budapest, Hungary\\
{\it E-mail address:}
carlos@renyi.hu\\
and\\
Department of Geometry, Roland E\"otv\"os University\\
P\'azm\'any P\'eter s\'et\'any 1/C,
H-1117 Budapest, Hungary\\

\noindent Rolf Schneider,
Mathematisches Institut, Albert-Ludwigs-Universit\"at\\
Eckerstr. 1,
D-79104 Freiburg i.~Br., Germany\\
{\it E-mail address:}  rolf.schneider@math.uni-freiburg.de}

\end{document}